\newcommand{\titel}{A Tight Bound for Minimal Connectivity}
\definecolor{hellblau}{rgb}{0.2,0.4,1} 
\definecolor{dunkelblau}{rgb}{0,0,0.8}
\definecolor{dunkelgruen}{rgb}{0,0.5,0}
\theoremstyle{plain} 
	\newtheorem{satz}{Satz}[] 
	\newtheorem{theorem}[satz]{Theorem}
	\newtheorem{lemma}[satz]{Lemma}
\theoremstyle{remark} 
\theoremstyle{definition} 
	\newtheorem{corollary}[satz]{Corollary}
\begin{document}
	\title{\titel}
		\author{Jens M. Schmidt\footnote{This research was supported by the DFG grant SCHM 3186/1-1.}\\Institute of Mathematics\\TU Ilmenau, Germany}
	\date{}
	\maketitle

\begin{abstract}
For minimally $k$-connected graphs on $n$ vertices, Mader proved a tight lower bound for the number $|V_k|$ of vertices of degree $k$ in dependence on $n$ and $k$. Oxley observed 1981 that in many cases a considerably better bound can be given if $m := |E|$ is used as additional parameter, i.e.\ in dependence on $m$, $n$ and $k$. It was left open to determine whether Oxley's bound is best possible.

We show that this is not the case, but propose a closely related bound that deviates from Oxley's long-standing one only for small values of $m$. We prove that this new bound is best possible. The bound contains Mader's bound as special case.
\end{abstract}

\section{Introduction}
\emph{Minimally $k$-connected} graphs (i.e.\ $k$-connected graphs, for which the deletion of any edge decreases the connectivity) have been in the focus of both structural and extremal graph theory~\cite{Bollobas2004,Kriesell2013} since their early days. 
For $k=2$, Dirac~\cite{Dirac1967} and Plummer~\cite{Plummer1968} showed that every minimally $2$-connected graph contains a vertex of degree 2. In 1969, Halin~\cite{Halin1969b} generalized this result by proving that every minimally $k$-connected graph contains a vertex of degree $k$. This proof led to a plethora of further results about the structure of minimally $k$-connected graphs in general, and the number $|V_k|$ of vertices of degree $k$ of these graphs in particular (see~\cite{Mader1996} for an extensive survey), which 1979 eventually culminated in a tight lower bound for $|V_k|$ shown by Mader~\cite{Mader1979}.

While Mader proved that his bound is tight for all $n := |V|$ and $k$ (up to certain parity values), Oxley~\cite{Oxley1981} found shortly after, and inspired by matroids, a different lower bound for $|V_k|$ that uses the parameters $m$, $n$ and $k$. Oxley states 1981 that his bound ``\emph{frequently sharpens}'' Mader's~\cite{Oxley1981}. Yet, the problems of classifying the parameters for which Oxley's bound improves Mader's and, more importantly, of finding a lower bound that is generally best possible in dependence on $m$, $n$ and $k$, are still open since then.

We solve both questions by proposing a bound that consists of Oxley's bound if $m \geq k(kn-1)/(2k-1)$ and a new simple bound if $m < k(kn-1)/(2k-1)$. The new bound contains Mader's bound as special case when $m=k(kn-1)/(2k-1)$, and strictly improves the best-known specialized lower bounds for $k \in \{2,3\}$ given in~\cite{Gross2013,Oxley1981}. As main result, we show that our bound is best possible for all $m$, $k \geq 2$ and $n > 2k$ (up to certain parity values).

The difficult part of the result is to exhibit suitable infinite families of minimally $k$-connected graphs in order to prove tightness for both ranges of $m$ mentioned above. The infinite family that we construct for small $m$ may also be of interest in different problem settings, as it consists of minimally $k$-connected graphs that are ``almost $k$-regular'', i.e.\ such that $m$ is close to a prescribed value slightly above the (minimal possible) value $\lceil kn/2 \rceil$.

After giving the necessary preliminaries, we will revisit and generalize existing lower bounds on $|V_k|$ in Section~\ref{sec:revisiting} and then formulate the new bound and prove its tightness in Section~\ref{sec:tight}.

\section{Preliminaries}\label{sec:prel}
We consider only finite, simple and undirected graphs. For a graph $G=(V,E)$, let $n_G := |V|$ and $m_G := |E|$; if $G$ is clear from the context, we will omit the subscript. A $k$-\emph{separator} of a graph is a set of $k \geq 0$ vertices whose deletion leaves a disconnected graph. A graph $G$ is $k$-\emph{connected} if $n > k$ and $G$ contains no $(k-1)$-separator.
A $k$-connected graph $G$ is \emph{minimally $k$-connected} if $G-e$ is not $k$-connected for every edge $e \in E$. Since every non-empty graph is $0$-connected according to this definition, there is no minimally $0$-connected graph that contains at least one edge. We thus assume $k \geq 1$ throughout this paper.

For a graph $G$, let $V_k := V_k(G)$ be the set of vertices of degree $k$ and let $E_k$ be the set of edges in $G$ that is induced by $V_k$. Further, let $F := F(G) := G-V_k$ and let $c_F$ be the number of components of $F$. If $G$ is minimally $k$-connected, the following lemmas by Mader ensure that $F$ carries a very special structure.

\begin{lemma}[{\cite[Korollar~1]{Mader1972}}]\label{lem:ForestLemma}
For every minimally $k$-connected graph, $F$ is a forest.
\end{lemma}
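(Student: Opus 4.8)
The plan is to prove the equivalent statement that every cycle of a minimally $k$-connected graph $G$ meets $V_k$. Indeed, $F = G - V_k$ contains a cycle if and only if $G$ contains a cycle all of whose vertices have degree at least $k+1$, so it suffices to rule out such a cycle. I therefore assume, for contradiction, that $C$ is a cycle of $G$ with $\deg_G(v) \geq k+1$ for every $v \in V(C)$.

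The first step is to extract a separator from each edge of $C$ using minimality. Fix an edge $e = xy \in E(C)$. Since $G$ is minimally $k$-connected, $G - e$ is not $k$-connected, so (as $n > k$) there is a set $T$ with $|T| = k-1$ such that $x$ and $y$ lie in distinct components of $(G-e) - T$. Writing $A$ for the vertex set of the side containing $x$, the only $G$-edge leaving $A$ apart from those into $T$ is $e$ itself; hence $N_G(A) \subseteq T \cup \{y\}$, and $k$-connectivity forces $N_G(A) = T \cup \{y\}$, a $k$-separator of $G$. Thus every edge $xy$ of $C$ yields a fragment $A$ with $|N_G(A)| = k$ such that $x \in A$ and $y \in N_G(A)$; its exterior $V \setminus (A \cup N_G(A))$ is nonempty precisely because $\deg_G(y) \geq k+1$ prevents the opposite side from collapsing to $\{y\}$. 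Symmetrically, the opposite side is a fragment containing $y$ with $x$ in its neighbourhood.

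The heart of the argument is a minimality-plus-submodularity step. Among all fragments obtained this way from the edges of $C$, I choose one, $A$, minimizing $|A|$. The cycle $C$ enters and leaves $A$ and therefore must cross the $k$-set $N_G(A)$; combining $A$ with a second fragment $B$ associated to a neighbouring cycle edge and applying submodularity of the neighbourhood function, $|N(A \cap B)| + |N(A \cup B)| \leq |N(A)| + |N(B)| = 2k$, produces a fragment strictly smaller than $A$ yet still attached to $C$ in the required way, contradicting the minimality of $A$. The hypothesis that all cycle vertices have degree at least $k+1$ is exactly what guarantees that the crossing edges of $C$ genuinely leave the separator and force a proper intersection, so that the submodular inequality drives the descent rather than being tight in a degenerate way.

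The main obstacle I anticipate is the bookkeeping needed to keep the constructed set a genuine fragment of the $k$-connected graph $G$: one must ensure both that $A \cap B$ (respectively $A \cup B$) has nonempty interior and nonempty exterior and that its neighbourhood has size exactly $k$ rather than strictly less, the latter being forbidden by $k$-connectivity and the former requiring case analysis. Degenerate configurations — a fragment reducing to a single vertex, or the minimum separators of adjacent cycle edges overlapping heavily — have to be treated separately, and it is precisely here that the degree condition on $V(C)$ and the $k$-connectivity lower bound $|N(\cdot)| \geq k$ must be played off against each other to close every case.
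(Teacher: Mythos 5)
The paper does not prove this lemma at all: it is imported verbatim as Mader's Korollar~1 from 1972 (equivalently, the celebrated theorem that every cycle of a minimally $k$-connected graph contains a vertex of degree $k$), so any self-contained argument is necessarily a ``different route.'' Your reduction to that cycle statement is correct, and your Step~2 --- extracting, from each cycle edge $e=xy$ and a $(k-1)$-separator of $G-e$, a fragment $A$ with $|N_G(A)|=k$, $x\in A$, $y\in N_G(A)$, with nonempty exterior because $\deg_G(y)\geq k+1$ --- is sound and is indeed how Mader's proof begins.

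The gap is that the entire difficulty of the theorem is concentrated in your third paragraph, which is a plan rather than a proof. Concretely: (a) you never specify which ``neighbouring cycle edge'' supplies the second fragment $B$, nor why $A$ and $B$ must \emph{cross} --- for two different edges of $C$ the associated fragments can perfectly well be nested or disjoint, in which case submodularity yields nothing; (b) the inequality $|N(A\cap B)|+|N(A\cup B)|\leq 2k$ only produces a new fragment after one verifies that the relevant corner sets have nonempty interior \emph{and} nonempty exterior (otherwise $k$-connectivity does not force $|N(\cdot)|\geq k$ and the ``fragment'' may degenerate); and (c), most importantly, to contradict the minimality of $|A|$ you must show the strictly smaller set is again a member of the family you minimized over, i.e.\ that it contains an endpoint of some edge of $C$ whose other endpoint lies in its neighbourhood --- ``still attached to $C$ in the required way'' is asserted, not derived. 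Your closing paragraph explicitly lists these as anticipated obstacles rather than resolving them; since they are exactly where Mader's argument does its work (his proof is several pages of careful case analysis on crossing fragments), the proposal as written does not establish the lemma. Either carry out the descent in full or, as the paper does, cite \cite[Korollar~1]{Mader1972}.
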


\begin{lemma}[{\cite[p.~66]{Mader1979}}]\label{lem:cplusEk}
For every minimally $k$-connected graph, $c_F+|E_k| \geq k$.
\end{lemma}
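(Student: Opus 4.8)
The plan is to prove the equivalent inequality $\kappa(G) \le c_F + |E_k|$; since a minimally $k$-connected graph satisfies $\kappa(G)=k$, this is exactly the assertion. A useful preliminary step is to record the counting identity obtained from $2m = \sum_{v} \deg(v) = k|V_k| + \sum_{v \in F}\deg(v)$ together with $|E(F)| = (n-|V_k|) - c_F$ (valid because $F$ is a forest by Lemma~\ref{lem:ForestLemma}) and $e(V_k,F) = k|V_k| - 2|E_k|$, which after simplification reads
\[
  c_F + |E_k| = (k-1)\,|V_k| + n - m .
\]
Thus the target is equivalent to the edge bound $m \le (k-1)\,|V_k| + n - k$, which is the Oxley-type form that later feeds the lower bound on $|V_k|$. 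Either formulation disposes of the degenerate case $F = \emptyset$ at once: then $V_k = V$, so $G$ is $k$-regular with $E_k = E$, and $n > k \ge 1$ forces $|E_k| = kn/2 \ge k$. Hence I would assume $F \neq \emptyset$ and write $C_1,\dots,C_{c_F}$ for the components of the forest $F$.

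For the main case I would try to exhibit a separating set of size at most $c_F + |E_k|$. The natural candidate starts from a vertex $v_0 \in V_k$ (which exists since every minimally $k$-connected graph has a vertex of degree $k$) and its neighbourhood $N(v_0)$ of size exactly $k$. The neighbours of $v_0$ lying in $V_k$ are spanned by the $E_k$-edges incident to $v_0$ and so contribute at most $|E_k|$ vertices, while the neighbours of $v_0$ lying in $F$ are distributed among at most $c_F$ components of $F$. The aim is to compress, for each component of $F$ reached by $v_0$, the several neighbours of $v_0$ inside that component into the bounded attachment that the component forces on $G$, so that the $k$ vertices of $N(v_0)$ collapse to at most $c_F + |E_k|$ vertices that still separate $v_0$ from the rest of $G$. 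Minimality enters precisely at this compression: for each edge $e$, the graph $G - e$ has a $(k-1)$-separator, which I would use to control how the individual tree components of $F$ attach across $v_0$ and thereby justify that the compressed set remains a separator.

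The hard part is exactly this global combination, since the bound is genuinely false for graphs that are merely $k$-connected with a forest $F$: for $k=2$ the graph $K_4$ minus one edge is $2$-connected, has $F = \{a,b\}$ connected (so $c_F = 1$) and $E_k = \emptyset$, giving $c_F + |E_k| = 1 < 2$, yet it fails to be minimally $2$-connected because the edge $ab$ can be deleted. This shows that $k$-connectivity together with Lemma~\ref{lem:ForestLemma} cannot suffice and that minimality must be invoked in an essential way. I expect the main obstacle to be controlling degree-$k$ vertices that have \emph{several} neighbours inside one tree component of $F$: here the local neighbourhood argument alone does not collapse the $k$ neighbours down to $c_F + |E_k|$, and one must combine the acyclicity of $F$ with the edge-separators supplied by minimality to rule out, or to absorb, such configurations.
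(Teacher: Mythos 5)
The paper does not prove this lemma at all: it is imported verbatim from Mader~\cite[p.~66]{Mader1979}, so there is no in-paper argument to compare against. Judged on its own terms, your proposal is not a proof but an outline whose central step is missing --- and you say so yourself (``the hard part is exactly this global combination''). What you do establish is correct and useful: the identity $c_F+|E_k|=(k-1)|V_k|+n-m$ (which is exactly the paper's Equation~\eqref{eq:OxleyGeneral} rearranged), the disposal of the case $F=\emptyset$ via $k$-regularity, and the $K_4$-minus-an-edge example showing that minimality must be used. But none of this touches the actual content of the lemma.

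The gap is the ``compression'' step, and I do not believe the local strategy you sketch can be completed as stated. You start from one vertex $v_0\in V_k$ and hope to shrink its $k$ neighbours to a separating set of size at most $c_F+|E_k|$. Two problems. First, the quantities do not match up: $N(v_0)$ only sees the components of $F$ that $v_0$ actually touches and the $E_k$-edges at $v_0$, whereas $c_F$ and $|E_k|$ count \emph{all} components and \emph{all} such edges; a local count at one vertex can at best bound $k$ by (components of $F$ meeting $N(v_0)$) summed with $d_{E_k}(v_0)$, and only if each component contributes a single vertex to a separator. Second, and more fundamentally, if $v_0$ has several neighbours in one tree component $C$ of $F$, there is no single vertex of $C$ whose removal separates $v_0$ from $C$ unless that vertex happens to cut all those neighbours off simultaneously; the edge-separators of size $k-1$ supplied by minimality give you, for each edge $e$, a separator of $G-e$, but you give no mechanism for combining these (one per edge, each depending on $e$) into the one global set you need. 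Mader's actual argument is a global one built on his structure theory of minimally $k$-connected graphs (fragments and the forest structure of $F$), not a neighbourhood count at a single degree-$k$ vertex. As it stands, the proposal identifies the right invariant and the right obstruction but does not prove the lemma.
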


We abbreviate $a \equiv b \text{ (mod } c)$ as $a \equiv_c b$ and write the statement that $a \equiv_c b$ for some $b \in \{b_1,\ldots,b_t\}$ as $a \equiv_c b_1,\ldots,b_t$.

\section{Revisiting the old Bounds}\label{sec:revisiting}
Let $G$ be a minimally $k$-connected graph. We revisit, generalize and compare the lower bounds for $|V_k|$ that are already known.

Mader showed that $|V_k| \geq k+1$ and $|V_k| \geq \Delta$~\cite[Korollar~2 and~Satz~4]{Mader1972}. Clearly, the latter bound is at least as good as the former, unless $G$ is $k$-regular (in which case $|V_k| = n$). However, both bounds are far from being tight.

In his seminal paper~\cite[Satz~3]{Mader1979}, Mader eventually proved
\begin{align}
	|V_k| \geq \frac{(k-1)n+2k}{2k-1} \label{eq:Mader}
\end{align}
and showed that there is a minimally $k$-connected graph attaining equality in~\eqref{eq:Mader} for every $k$ and $n > 2k$ such that $n \equiv_{2k-1} 0,1,2,3,5,7,\ldots,2k-3$. In that sense, Bound~\eqref{eq:Mader} is tight for the parameters $n$ and $k$. The following is a slight generalization of Bound~\eqref{eq:Mader}, which relates it to $\Delta$.

\begin{theorem}\label{thm:mader}
For every minimally $k$-connected graph,
\begin{align}
	|V_k| \geq \frac{(k-1)n+2(c_F+|E_k|)+\max\{0,\Delta-(k+1)\}}{2k-1}. \label{eq:MaderGeneral}
\end{align}
\end{theorem}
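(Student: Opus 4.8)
The plan is to derive inequality~\eqref{eq:MaderGeneral} by a careful counting of edge endpoints, partitioning the edges of $G$ according to whether their endpoints lie in $V_k$ or in $F$. Let me set $|V_k| =: v_k$ and recall that $n = v_k + n_F$, where $n_F := |V(F)|$. I would count the sum of all degrees in $G$ in two ways. On one hand, $\sum_{u \in V} \deg(u) = 2m$. On the other hand, I split this sum over $V_k$ and over $V(F)$. Every vertex in $V_k$ has degree exactly $k$, contributing $k v_k$. Every vertex in $F$ has degree at least $k+1$ by definition of $V_k$, so the vertices of $F$ contribute at least $(k+1)n_F$; if I want to capture the $\max\{0,\Delta-(k+1)\}$ term, I single out one vertex realizing $\Delta$ (which lies in $F$ whenever $\Delta > k$) and give it its true degree, so the contribution of $F$ is at least $(k+1)n_F + \max\{0,\Delta-(k+1)\}$.

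The key is to bound the number of edges $m$ from above in terms of $v_k$ and the forest structure, so that the degree-sum inequality can be turned into a lower bound on $v_k$. I would partition $E$ into three classes: edges inside $V_k$ (there are $|E_k|$ of these), edges inside $F$, and edges between $V_k$ and $F$. Since $F$ is a forest by Lemma~\ref{lem:ForestLemma}, the number of edges inside $F$ is exactly $n_F - c_F$. The crossing edges between $V_k$ and $F$ can be counted from the $V_k$ side: each vertex of $V_k$ has degree $k$, of which some go to $E_k$ and the rest cross to $F$, giving exactly $k v_k - 2|E_k|$ crossing edges. Thus
\begin{align}
	m = |E_k| + (n_F - c_F) + (k v_k - 2|E_k|) = k v_k + n_F - c_F - |E_k|. \label{eq:edgecount}
\end{align}
Combining this exact edge count with the degree-sum bound $2m \geq k v_k + (k+1)n_F + \max\{0,\Delta-(k+1)\}$ eliminates $m$: substituting~\eqref{eq:edgecount} gives $2k v_k + 2n_F - 2c_F - 2|E_k| \geq k v_k + (k+1)n_F + \max\{0,\Delta-(k+1)\}$, which rearranges to $(2k-1)v_k \geq (k-1)n_F + 2(c_F+|E_k|) + \max\{0,\Delta-(k+1)\}$. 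Finally I would replace $n_F$ by $n - v_k$ on the right, collect the $v_k$ terms, and divide by $2k-1$; the coefficient of $v_k$ works out so that $n_F = n - v_k$ contributes $(k-1)n - (k-1)v_k$, and absorbing the $-(k-1)v_k$ into the left side yields exactly $(2k-1)v_k \geq (k-1)n + 2(c_F+|E_k|) + \max\{0,\Delta-(k+1)\}$, which is~\eqref{eq:MaderGeneral}.

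I expect the only genuinely delicate point to be the justification of the $\max\{0,\Delta-(k+1)\}$ term, namely that a vertex of degree $\Delta$ can be accounted for separately without double counting: when $\Delta \leq k+1$ the extra term is $0$ and the uniform bound $\deg(u) \geq k+1$ on $F$ suffices, while when $\Delta > k+1$ the maximum-degree vertex necessarily lies in $F$ (its degree exceeds $k$), so I may legitimately upgrade its contribution from the generic $k+1$ to its actual value $\Delta$, adding exactly $\Delta - (k+1)$ to the lower bound on the $F$-degree sum. The rest is bookkeeping: the edge count~\eqref{eq:edgecount} relies on $F$ being a forest, which is precisely Lemma~\ref{lem:ForestLemma}, and no further structural input (such as Lemma~\ref{lem:cplusEk}) is needed for this generalized inequality — that second lemma is what one would later invoke to recover the clean Bound~\eqref{eq:Mader} as the special case $c_F + |E_k| \geq k$.
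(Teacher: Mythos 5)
Your proof is correct and takes essentially the same approach as the paper's: the paper directly equates/bounds the two counts of edges crossing between $V_k$ and $F$, while you compare the total degree sum with an exact three-way edge count, which is the same bookkeeping after adding $2|E_k|+2|E(F)|$ to both sides, and the key ingredients (Lemma~\ref{lem:ForestLemma} giving $|E(F)|=n_F-c_F$, degree exactly $k$ on $V_k$, degree at least $k+1$ on $F$, and the $\Delta$-upgrade of a single vertex of $F$) are identical. One small slip: the intermediate inequality you display as $(2k-1)v_k \geq (k-1)n_F + 2(c_F+|E_k|) + \max\{0,\Delta-(k+1)\}$ should have $k v_k$ on the left; your subsequent substitution $n_F = n - v_k$ then correctly produces the coefficient $2k-1$, so the final bound is unaffected.
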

\begin{proof}
There are exactly $|E(F)| = |V(F)|-c_F = n-|V_k|-c_F$ edges in $F$. Thus, the number of edges that have exactly one end vertex in $F$ is at least $(k+1)|V(F)|-2|E(F)|+\max\{0,\Delta-(k+1)\} = (k-1)(n-|V_k|)+2c_F+\max\{0,\Delta-(k+1)\}$. Counting these edges in dependence on $V_k$, we obtain $k|V_k|-2|E_k| \geq (k-1)(n-|V_k|)+2c_F+\max\{0,\Delta-(k+1)\}$, which gives the claim.
\end{proof}

According to Lemma~\ref{lem:cplusEk}, Bound~\eqref{eq:MaderGeneral} implies Bound~\eqref{eq:Mader}. Although Bound~\eqref{eq:Mader} is tight for many graphs, it is far from being tight if $m$ is introduced as additional parameter. In fact, we will show in the next section that Bound~\eqref{eq:Mader} is only best possible when $m = \frac{k(kn-1)}{2k-1}$.

Using a surprisingly simple proof, Oxley~\cite[Prop.~2.19]{Oxley1981}\cite[Fact~74 in~6.6.12]{Gross2013} observed for $k \geq 2$ that $|V_k| \geq \frac{m-n+1}{k-1}$. For the parameters $m$, $n$ and $k$, this is the best bound known so far. Since Oxley used $c_F+|E_k| \geq 1$ in his proof, the bound can be slightly strengthened by applying Lemma~\ref{lem:cplusEk}. In addition, a closer look at the proof of the bound shows that we can actually obtain the following \emph{equality} for $V_k$.

\begin{theorem}\label{thm:Oxley}
For $k \geq 2$ and every minimally $k$-connected graph,
	\begin{align}
		|V_k| &= \frac{m-n+c_F+|E_k|}{k-1}. \label{eq:OxleyGeneral}\\
		\text{In particular, }|V_k| &\geq \left\lceil \frac{m-n+k}{k-1} \right\rceil.\phantom{In particular, } \label{eq:Oxley}
	\end{align}
\end{theorem}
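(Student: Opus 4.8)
The plan is to establish the exact identity~\eqref{eq:OxleyGeneral} first, since Bound~\eqref{eq:Oxley} will then follow by a short application of Lemma~\ref{lem:cplusEk} and a ceiling argument. The key observation is that the proof of Theorem~\ref{thm:mader} already counts the relevant edges; here I would instead count \emph{all} edges of $G$ and split them according to whether they lie inside $F$, inside $V_k$, or cross between the two sets. Writing $e(V_k,F)$ for the number of edges with exactly one end in $V_k$, I would use the degree sum over $V_k$ to get $e(V_k,F) = k|V_k| - 2|E_k|$, and the edge count $m = |E(F)| + |E_k| + e(V_k,F)$ together with the forest structure $|E(F)| = n - |V_k| - c_F$ from Lemma~\ref{lem:ForestLemma}.

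Substituting $|E(F)| = n - |V_k| - c_F$ and $e(V_k,F) = k|V_k| - 2|E_k|$ into $m = |E(F)| + |E_k| + e(V_k,F)$ yields
\begin{align}
m = (n - |V_k| - c_F) + |E_k| + (k|V_k| - 2|E_k|) = n - c_F - |E_k| + (k-1)|V_k|.\notag
\end{align}
Solving for $|V_k|$ gives exactly $|V_k| = (m - n + c_F + |E_k|)/(k-1)$, which is~\eqref{eq:OxleyGeneral}; note the division by $k-1$ is legitimate precisely because we assume $k \geq 2$. The crucial point making this an \emph{equality} rather than an inequality (as in Theorem~\ref{thm:mader}) is that every edge of $G$ falls into exactly one of the three categories, so no slack is introduced by a lower bound on crossing edges.

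For the ``in particular'' part, I would apply Lemma~\ref{lem:cplusEk}, namely $c_F + |E_k| \geq k$, to the numerator of~\eqref{eq:OxleyGeneral}, obtaining $|V_k| \geq (m - n + k)/(k-1)$. Since $|V_k|$ is an integer, the right-hand side can be rounded up to give $|V_k| \geq \lceil (m-n+k)/(k-1) \rceil$, which is~\eqref{eq:Oxley}. I do not expect a genuine obstacle here: the argument is a clean edge-counting identity, and the main care needed is bookkeeping to ensure each edge is counted once and that the forest relation from Lemma~\ref{lem:ForestLemma} is invoked correctly. The only subtlety worth flagging is that the equality form~\eqref{eq:OxleyGeneral} is slightly stronger than Oxley's original inequality because it retains $c_F + |E_k|$ exactly rather than bounding it below, which is what the excerpt's surrounding text emphasizes.
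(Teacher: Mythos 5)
Your proposal is correct and follows essentially the same route as the paper: both derive the edge identity $m = k|V_k| - |E_k| + |E(F)|$ (you merely split the non-$F$ edges into $E_k$ and crossing edges before recombining), substitute $|E(F)| = n - |V_k| - c_F$ from the forest structure, and then apply Lemma~\ref{lem:cplusEk} with integrality for Bound~\eqref{eq:Oxley}. No gaps.
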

\begin{proof}
The number of edges that are not in $F$ is $k|V_k|-|E_k|$, as $k|V_k|$ double-counts every edge in $E_k$. Hence, $m = k|V_k|-|E_k|+|E(F)|$ and $|V_k| = \frac{m+|E_k|-|E(F)|}{k}$. There are exactly $n-|V_k| = |E(F)|+c_F$ vertices of degree greater than $k$ in $G$, which implies Bound~\eqref{eq:OxleyGeneral}. Bound~\eqref{eq:Oxley} follows from Bound~\eqref{eq:OxleyGeneral} by applying Lemma~\ref{lem:cplusEk}.
\end{proof}

With Bound~\eqref{eq:OxleyGeneral}, we have a bound at hand that is always optimal, as long as a minimally $k$-connected graph with the given parameters exists. Unfortunately, it is not clear at all how to decide whether there is a graph with such a given parameter constellation. We therefore investigate bounds for the rather natural parameters $m$, $n$ and $k$.

The given bounds (apart from~\eqref{eq:OxleyGeneral}, which is always optimal) relate to each other as follows: For the interesting case $n > 2k$, both bounds~\eqref{eq:Mader} and~\eqref{eq:Oxley} imply $|V_k| \geq \lceil k + \frac{1}{2k-1}\rceil = k+1$. The bound $|V_k| \geq \Delta$ however is independent of Bounds~\eqref{eq:Mader},~\eqref{eq:MaderGeneral} and~\eqref{eq:Oxley}: Clearly, $\Delta$ can be smaller than any of these bounds, as e.g.\ the $k$-regular $k$-connected graphs show. For every sufficiently large wheel graph, $\Delta > \eqref{eq:Mader}$ and $\Delta > \eqref{eq:Oxley}$. For every $n > 2k$ and $k > 1$, the graph $K_{k,n-k}$ shows that $ \Delta = n-k > \frac{kn-1}{2k-1} = \eqref{eq:MaderGeneral}$. The next section will show that Bound~\eqref{eq:Oxley} is at least as good as~\eqref{eq:Mader} if and only if $m \geq k(kn-1)/(2k-1)$ (up to parity issues).

\section{A Tight Bound}\label{sec:tight}
Harary~\cite{Harary1962} showed $m \geq \lceil kn/2 \rceil$ for every (minimally) $k$-connected graph, where $m = (kn+1)/2$ can in fact be attained by such graphs when $kn$ is odd.
Mader~\cite[Satz~2]{Mader1971} showed $m \leq kn-\binom{k+1}{2}$ for every minimally $k$-connected graph, where equality is attained only for $K_{k+1}$. Thus, every minimally $k$-connected graph satisfies $\lceil kn/2 \rceil \leq m \leq kn-\binom{k+1}{2}$.

If $m$ is large, our general lower bound for the parameters $m$, $n$ and $k$ will consist of Oxley's Bound~\eqref{eq:Oxley}. If $m$ is small, we use the following lower bound instead, as it outperforms the others in that case. Although the bound is very simple, it does not seem to have been exploited for $|V_k|$ so far.

\begin{lemma}
For every minimally $k$-connected graph,
	\begin{align}
		|V_k| \geq (k+1)n-2m. \label{eq:simple}
	\end{align}
\end{lemma}
\begin{proof}
Let $i \in \mathbb{Q}$ be such that $m = kn/2 + i$. Since $m \geq \lceil kn/2 \rceil$, $i \geq 0$. There are at most $2i$ vertices of degree greater than $k$, as counting the degrees for $n-|V_k| > 2i$ implies the contradiction $m \geq \frac{k|V_k| + (k+1)(n-|V_k|)}{2} > kn/2 + i$. Thus, $n-|V_k| \leq 2i = 2m-kn$, which gives the claim.
\end{proof}

For $k \geq 2$, this gives the general lower bound $|V_k| \geq \max\{(k+1)n-2m, \lceil (m-n+k)/(k-1) \rceil \}$. We next show that this bound is tight.

\begin{theorem}\label{thm:main}
For $k \geq 2$ and every minimally $k$-connected graph $G$,
	\begin{align}
		|V_k| \geq 
		\begin{cases}
			(k+1)n-2m 		& \emph{if } m \leq \frac{k(kn-1)}{2k-1}\\
			\left\lceil (m-n+k)/(k-1) \right\rceil & \emph{if } m \geq \frac{k(kn-1)}{2k-1}.
		\end{cases}
		\label{eq:general}
	\end{align}
The bound is best possible (even without the ceiling) for every $m$, $n \geq 3k-2$ and $k \geq 2$ such that
\begin{compactitem}
	\item $m \equiv_{k(k-1)} k(n-1)-i$ and $0 \leq i \leq 2\lfloor \frac{k}{2} \rfloor$ if $m \leq \frac{k(kn-1)}{2k-1}$, and
	\item $m \equiv_{k-1} k(n-1)$ if $m \geq \frac{k(kn-1)}{2k-1}$.
\end{compactitem}
\end{theorem}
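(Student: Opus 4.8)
The plan is to split the proof into the lower-bound part and the tightness part, since these are genuinely independent tasks. For the lower bound in Bound~\eqref{eq:general}, almost all the work is already done: when $m \le k(kn-1)/(2k-1)$ the bound is exactly the simple Bound~\eqref{eq:simple}, and when $m \ge k(kn-1)/(2k-1)$ it is exactly Oxley's Bound~\eqref{eq:Oxley}. So the only thing I would actually verify here is that these two thresholds meet: that at $m = k(kn-1)/(2k-1)$ the two expressions $(k+1)n-2m$ and $(m-n+k)/(k-1)$ both equal Mader's value $((k-1)n+2k)/(2k-1) = (kn-1)/(2k-1)$, which also justifies the remark that the bound contains Mader's bound as the special case $m = k(kn-1)/(2k-1)$. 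This is a short algebraic check and I would present it as such.

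The substantial content is tightness, and I would organize it around the two regimes in the hypothesis. For the large-$m$ regime ($m \ge k(kn-1)/(2k-1)$, with $m \equiv_{k-1} k(n-1)$), I need, for each admissible $(m,n,k)$, a minimally $k$-connected graph attaining $|V_k| = (m-n+k)/(k-1)$. By Theorem~\ref{thm:Oxley} attaining Oxley's bound is equivalent to realizing the equality case of Lemma~\ref{lem:cplusEk}, i.e.\ constructing graphs with $c_F + |E_k| = k$ (so that Bound~\eqref{eq:OxleyGeneral} collapses to Bound~\eqref{eq:Oxley}). The congruence condition $m \equiv_{k-1} k(n-1)$ is exactly the divisibility needed for $(m-n+k)/(k-1)$ to be an integer, so the ceiling is superfluous here, matching the ``even without the ceiling'' claim. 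I would build these extremal graphs incrementally: start from a small base graph with the right $F$-structure and splice in gadgets that each raise $m$ by $k-1$ while preserving minimal $k$-connectivity and the value $c_F + |E_k| = k$, thereby sweeping out an entire residue class of $m$ for fixed $n$, and separately vary $n$.

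The hard part, as the authors themselves flag, is the small-$m$ regime ($m \le k(kn-1)/(2k-1)$), where I must exhibit minimally $k$-connected graphs that are ``almost $k$-regular'' and attain $|V_k| = (k+1)n - 2m$. Here tightness of Bound~\eqref{eq:simple} forces the equality case of its proof: exactly $2i = 2m-kn$ vertices must have degree greater than $k$, and all of these excess-degree vertices must have degree exactly $k+1$ (any vertex of degree $\ge k+2$ would make the counting strict). So the target is a minimally $k$-connected graph with precisely $2m-kn$ vertices of degree $k+1$ and all remaining vertices of degree $k$ — a very rigid degree sequence living just above the Harary minimum $\lceil kn/2\rceil$. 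I expect this to be the main obstacle: constructing an \emph{infinite family} realizing such a near-regular degree sequence while simultaneously guaranteeing minimal $k$-connectivity is delicate, because near-$k$-regularity pushes toward high connectivity-criticality, yet minimal $k$-connectivity forbids any removable edge. The parameters $0 \le i \le 2\lfloor k/2 \rfloor$ and the modulus $k(k-1)$ in the congruence condition presumably record exactly which small counts of degree-$(k+1)$ vertices can be packed into a minimally $k$-connected graph of a given size, so I would first pin down a flexible base gadget controlling a small, bounded surplus $i$, and then establish a periodic extension (period $k(k-1)$ in $m$) that propagates the construction to arbitrarily large $n$ while keeping the degree sequence and minimality intact. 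Verifying minimal $k$-connectivity of these explicit families — showing $k$-connectivity and that every edge is critical — is where the real effort lies, and I would lean on a structural characterization (e.g.\ via the forest structure of $F$ from Lemma~\ref{lem:ForestLemma} together with a fan/ear-type argument) rather than checking edges one at a time.
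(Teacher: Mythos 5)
Your reduction of the lower bound to Bounds~\eqref{eq:simple} and~\eqref{eq:Oxley} is exactly what the paper does, and your analysis of what equality forces is sound: in the small-$m$ regime an extremal graph must have precisely $2m-kn$ vertices of degree exactly $k+1$ and all others of degree $k$, and in the large-$m$ regime one must realize $c_F+|E_k|=k$ so that Bound~\eqref{eq:OxleyGeneral} collapses to Bound~\eqref{eq:Oxley}. (One small slip: at the threshold $m=\frac{k(kn-1)}{2k-1}$ both case expressions equal $\frac{(k-1)n+2k}{2k-1}$, which is Mader's bound; your further identification of this with $\frac{kn-1}{2k-1}$ is false except when $n=2k+1$.)

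The genuine gap is that the tightness part — which is the entire content of the theorem beyond concatenating two known bounds — is never actually carried out. You name the obstacles ("pin down a flexible base gadget", "establish a periodic extension", "splice in gadgets that raise $m$ by $k-1$") but exhibit no graph, verify no connectivity, and do no parameter arithmetic. The paper's proof lives entirely in these details: it builds a single base family $H_T(k,l)$ from $k$ disjoint copies of a tree $T$ with complete bipartite "layers" attached to each row of corresponding vertices, which sits exactly on the threshold $m=\frac{k(kn-1)}{2k-1}$ and attains both cases with equality; it then perturbs $H_T(k,l)$ downward in $m$ (taking $T$ a path, deleting an $i$-matching split across the two end layers and deleting $j$ degree-$k$ vertices) or upward in $m$ (adding $j$ new degree-$k$ vertices attached within rows and contracting $i$ edges of $F$ at a leaf's copies). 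The congruence hypotheses are not, as you speculate, a record of "which surpluses can be packed in"; they are precisely what makes the parameters $l$, $i$, $j$ integers, and a nontrivial chunk of the proof is devoted to showing these integers land in the feasible ranges ($0\leq j\leq l$ with side conditions in the small-$m$ case; $l\geq 2$ when $i\neq 0$ and $j\geq i$ in the large-$m$ case, the latter being where the hypothesis $n\geq 3k-2$ enters via Mader's upper bound $m\leq kn-k^2$). Your proposal gives no route to any of this, and the minimal-$k$-connectivity verification you defer to "a fan/ear-type argument" is in fact done edge-by-edge using the explicit layer structure (every edge is incident to a degree-$k$ vertex, lies in a $k$-edge cut, or its deletion exposes a $(k-1)$-separator among leaf copies). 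As it stands the proposal is a correct problem analysis, not a proof.
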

\begin{proof}
Bound~\eqref{eq:general} follows directly from bounds~\eqref{eq:Oxley} and~\eqref{eq:simple}. We prove its tightness under the given assumptions.

Take $k$ vertex-disjoint copies $T_1,\ldots,T_k$ of a tree $T$ with maximum degree at most $k+1$ and $l := |V(T)| \geq 1$. For a vertex $v \in T$, let $v_1,\ldots,v_k$ be the vertices in $T_1,\ldots,T_k$ that correspond to $v$; we call this vertex set the \emph{row} of $v$. Obtain the graph $H_T(k,l)$ from $T_1 \cup \cdots \cup T_k$ by adding $k+1-deg_T(v)$ new vertices for each vertex $v$ in $T$ and joining these vertices to each vertex of the row of $v$ by an edge (see Figure~\ref{fig:ConstructionMader}). This way, every vertex in a tree copy has degree exactly $k+1$ in $H_T(k,l)$, and $F = T_1 \cup \cdots \cup T_k$. The subgraph that is induced by the vertices of a row and the vertices added to this row is called a \emph{layer}. Every layer of $H_T(k,l)$ is a complete bipartite graph.

We will use $H := H_T(k,l)$ in the construction of tight graph families; to simplify later arguments, we first determine $|V_k(H)|$, $n_H$ and $m_H$. Since $|V(F(H))| = kl$ and $|E(F(H))| = k(l-1)$ in $H$, we have $k|V_k(H)| = (k+1)|V(F(H))|-2|E(F(H))| = (k-1)kl+2k$, which implies $|V_k(H)| = (k-1)l+2$ and $n_H = |V(F(H))|+|V_k(H)| = (2k-1)l+2$. The equality for $n_H$ shows that the construction is well-defined for every $n_H > 2k$ such that $n_H \equiv_{2k-1} 2$, but not well-defined for any $n_H \leq 2k$, as then $l < 1$. We have $m_H = k|V_k(H)|+|E(F(H))|= k(kl+1)$, which implies $m_H = \frac{k(kn_H-1)}{2k-1}$. Thus, $H$ lies on the threshold of Bound~\eqref{eq:general}. Basic calculus shows $|V_k(H)| = (k+1)n_H-2m_H = (m_H-n_H+k)/(k-1)$; hence, $H$ satisfies both cases of Bound~\eqref{eq:general} with equality.

\begin{figure}[h!tb]
	\centering
	\subfloat[The graph $H_T(3,4)$, where $T$ is the star graph on $l=4$ vertices. The thick blue subgraph depicts $F$.]{
		\includegraphics[scale=0.8]{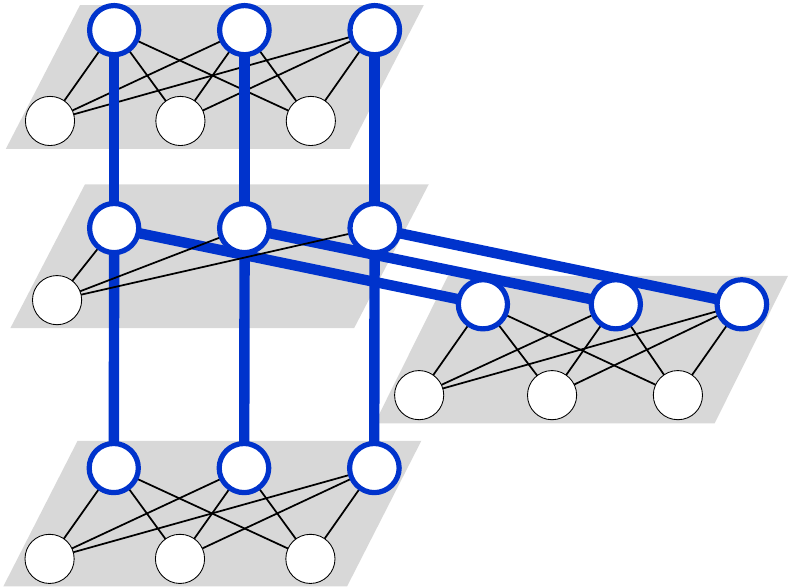}
		\label{fig:ConstructionMader}
	}
	\hspace{0.4cm}
	\subfloat[The graph $H'_T(5,3,3)$, where $T$ is the path on $l=3$ vertices with end vertices $s$ and $t$. The dashed red edges depict the $i=3$ edges that were deleted from $K_{k,k}$ in the layers of $t$ and $s$ as part of a $2$- and a $1$-matching. As $j=1$, the middle row consists only of vertices in $V_k$. The thick blue subgraph depicts $F$, which consists of four isolated vertices.]{
	\makebox[6.5cm]{
		\includegraphics[scale=0.81]{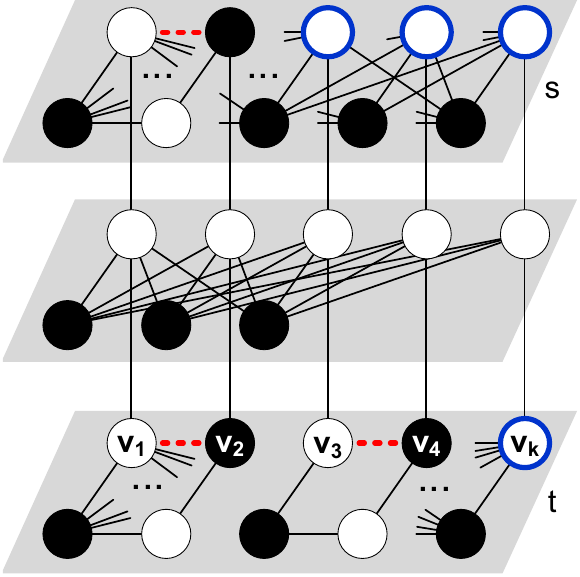}
		\label{fig:SmallM4}
	}}
	\caption{$H_T(k,l)$ and $H'_T(k,l,i)$.}
	\label{fig:Constructions}
\end{figure}

Consider the case $m \leq \frac{k(kn-1)}{2k-1}$ and let $m \equiv_{k(k-1)} k(n-1)-i$ for any $0 \leq i \leq 2\lfloor \frac{k}{2} \rfloor$. We construct a minimally $k$-connected graph satisfying $|V_k|=(k+1)n-2m$. The high-level idea of the construction is to use a modification of $H$ and delete a carefully chosen matching of size $i$ as well as $j$ vertices such that the resulting graph is minimally $k$-connected.

Let $l := \frac{k(n-1)-i-m}{k(k-1)}$. Since $m \equiv_{k(k-1)} k(n-1)-i$, $l$ is an integer. From $m \leq \frac{k(kn-1)}{2k-1}$ follows $l \geq \frac{n-2}{2k-1}-\frac{i}{k(k-1)}$, which implies $l \geq 1-\frac{1}{k-1}$ due to $n > 2k$ and $i \leq k$. Hence, $l \geq 1$. Let $j := l(2k-1)-n+2$; clearly, $j$ is an integer.

We prove that $0 \leq j \leq l$ such that $j = l-1$ implies $i \leq \lfloor \frac{k}{2} \rfloor$, and $j = l$ implies $i=0$. Since $\frac{k(kn-1)}{2k-1} \equiv_{k(k-1)} k(n-1)$, we have $m \equiv_{k(k-1)} \frac{k(kn-1)}{2k-1}-i$. As $m \leq \frac{k(kn-1)}{2k-1}$, this implies $m \leq \frac{k(kn-1)}{2k-1}-i$. Using this bound in the definition of $j$ gives $j \geq 0$. Two elementary calculations on the definitions of $m$, $l$ and $j$ show that $l = \frac{n-2+(j-l)}{2(k-1)}$ and $m = k(n-1)-i-k(k-1)l$. Inserting the former in the latter, we obtain $m \leq \frac{kn}{2}-i-\frac{k}{2}(j-l)$. If $j \geq l$, we conclude $j = l$ and $i=0$, since $m \geq \frac{kn}{2}$ (in addition, $n$ is even in this case, as $n = (2k-1)l-j+2$). If $j = l-1$, $n$ is odd, since $n = (2k-1)l-j+2$. Then $m \leq \frac{kn}{2}+\frac{k}{2}-i$ implies $i \leq \lfloor \frac{k}{2} \rfloor$, as $m \geq \lceil \frac{kn}{2} \rceil$.

Let $T$ be the path on $l$ vertices, let $s$ and $t$ be its end vertices and let $v \in \{s,t\}$. Then the layer of $v$ in $H_T(k,l)$ is $K_{k,k}$; let its two color classes be black and white such that the row vertices $v_1,\ldots,v_k$ are white. In order to describe the construction, we need the following operation of \emph{deleting an $x$-matching}, $0 \leq x \leq \lfloor k/2 \rfloor$, in the layer of $v$ (see Figure~\ref{fig:SmallM4}): For every $1 \leq z \leq x$, replace the (white) row vertex $v_{2z}$ with a black vertex of the $K_{k,k}$ that is not contained in the row of $v$ and delete the edge that joins $v_{2z-1}$ to this black vertex. This way, exactly $x$ edges are deleted from $K_{k,k}$ that form a matching; since both end vertices of every such edge had degree $k+1$ before, this decreases $|V_k|$ by exactly two.

Let $i_t := \min\{i,\lfloor k/2 \rfloor\}$ and $i_s := \max\{0,i-\lfloor k/2 \rfloor\}$; thus, $i_s+i_t=i$. Obtain the graph $H' := H'_T(k,l,i)$ from $H_T(k,l)$ by deleting an $i_t$-matching in the layer of $t$, an $i_s$-matching in the layer of $s$, and one vertex of degree $k$ from each of $j$ layers that are chosen according to the following preference list on their vertices in $T$: inner vertices of $T$, $s$, $t$. This construction is well-defined, as $l \geq 1$ (which is needed for the construction of $H_T(k,l)$) and $0 \leq j \leq l$ such that $j = l-1$ implies $i \leq \lfloor \frac{k}{2} \rfloor$, and $j = l$ implies $i=0$.

By applying Menger's theorem and replacing deleted matching edges with paths (either paths of length three in the same layer or paths that contain exactly two vertices of the layer of the other end vertex of $T$), one can see that $H'$ is $k$-connected.
Since every edge is incident to a vertex of degree $k$ or contained in an edge cut that consists of $k$ edges, $H'$ is minimally $k$-connected. Counting edges and vertices of $H'$ in the same way as done for $H$, we obtain $|V_k(H')| = (k-1)(l+j)+2+2i$, $n_{H'} = (2k-1)l+2-j$ and $m_{H'} = k(kl+1-j)-i$. Thus, expanding $j$ in the equality for $n_{H'}$ shows $n_{H'} = n$, and expanding $j$ and substituting $l$ with $\frac{n-2+j}{2k-1}$ in the equality for $m_{H'}$ shows $m_{H'} = m$. Then $H'$ satisfies $|V_k(H')| = (k+1)n-2m$, as claimed.

Consider the case $m \geq \frac{k(kn-1)}{2k-1}$ and let $m \equiv_{k-1} k(n-1)$. We construct a minimally $k$-connected graph satisfying $|V_k|=(m-n+k)/(k-1)$. In particular, this shows that Bound~\eqref{eq:general} is tight without the ceiling. The high-level idea of the construction is to contract $i$ suitably chosen edges in $H$ such that the resulting graph is minimally $k$-connected, followed by adding sufficiently many new vertices of degree $k$ in order to compensate for the vertex loss.

Since $m \equiv_{k-1} k(n-1)$, $\frac{k(n-1)-m}{k-1}$ is an integer. Let $i \in \{0,\ldots,k-1\}$ such that $\frac{k(n-1)-m}{k-1}+i$ is divisible by $k$; thus, we have $m \equiv_{k(k-1)} k(n-1)+(k-1)i$. Therefore, $l := \frac{k(n-1)-m+(k-1)i}{k(k-1)}$ is an integer.

We prove that $l \geq 1$ and, if $i \neq 0$, $l \geq 2$. Since $G$ is minimally $k$-connected, $m \leq kn-\binom{k+1}{2}$, where equality is only attained for $G = K_{k+1}$, as mentioned before. Since $G = K_{k+1}$ contradicts $n > 2k$, we have $m < kn-\binom{k+1}{2}$. From $m < kn-\binom{k+1}{2} \leq k(n-1)$ and $i \geq 0$ follows $l > 0$ and thus $l \geq 1$. For $i \geq \frac{k}{2}$, we have $m < kn-\binom{k+1}{2} \leq k(n-1)+(k-1)i-k(k-1)$, which implies $l \geq 2$. Consider the remaining case $1 \leq i < \frac{k}{2}$. Since $n \geq 3k-2$, we can use a result of Mader (see e.g.~\cite[Thm.~4.9]{Bollobas2004}), which proves $m \leq kn-k^2$.
Because $i \geq 1$, we have $m \leq kn-k^2 < kn-k^2+(k-1)i = k(n-1)+(k-1)i-k(k-1)$, which shows $l \geq 2$. We conclude for all cases $l \geq 1$ and, if $i \neq 0$, $l \geq 2$.

Let $j := n-2+i-(2k-1)l$; this will be the number of vertices that is added to the contracted graph. Clearly, $j$ is an integer and, since $i < (2k-1)l$, we have $j \leq n-3$. We prove that $j \geq i$.
If $m = \frac{k(kn-1)}{2k-1}$, $m \equiv_{k(k-1)} k(n-1)+(k-1)i$ implies $(2k-1)(k-1)i \equiv_{k(k-1)} 0$ and, as $2k-1$ and $k$ are co-prime, $i = 0$.
Since $m \geq \frac{k(kn-1)}{2k-1}$, $m \geq \frac{k(kn-1)}{2k-1}+(k-1)i$ follows from $m \equiv_{k(k-1)} k(n-1)+(k-1)i$. Inserting this lower bound into the definition of $l$ and using the result in the definition of $j$ gives $j \geq i$. Hence, $0 \leq i \leq j \leq n-3$.

Obtain the graph $H'' := H''_T(k,l,i,j)$ from $H_T(k,l)$ by adding $j$ new vertices of degree $k$ such that the neighbors of every new vertex are in the same row and then contracting $i$ edges of $F$ that are incident to the $k$ copies of a leaf of $T$. This construction is well-defined, as we have $l \geq 1$ and, if there is at least one contraction, the desired $i$ edges in $F$ exist due to $l \geq 2$. By applying Menger's theorem, one can see that $H''$ is $k$-connected.
In addition, $H''$ is minimally $k$-connected, as every edge $e$ is incident to a vertex of degree $k$, contained in an edge cut that consists of $k$ edges, or such that $G-e$ contains a $(k-1)$-separator that consists of $k-1$ copies of the leaf chosen in $T$. Counting edges and vertices as before, we obtain $|V_k(H'')| = (k-1)l+2+j$, $n_{H''} = (2k-1)l+2+j-i$ and $m_{H''} = k(kl+1+j)-i$. Thus, expanding $j$ in the equality for $n_{H''}$ shows $n_{H''} = n$, and expanding $j$ and then $l$ in the equality for $m_{H''}$ shows $m_{H''} = m$. Then $H''$ satisfies $|V_k(H'')| = (m-n+k)/(k-1)$, as claimed. 
\end{proof}

In the tightness proof above, the precondition $n \geq 3k-2$ is used only in the case $m \geq \frac{k(kn-1)}{2k-1}$ for the parity values $1 \leq i < \frac{k}{2}$. Hence, for the remaining values $i=0$ and $\lceil \frac{k}{2} \rceil \leq i \leq k-1$ that satisfy $m \equiv_{k(k-1)} k(n-1)-i$, the weaker precondition $n > 2k$ suffices.

\begin{corollary}\label{cor:boundsmalln}
Bound~\eqref{eq:general} is best possible (even without the ceiling) for every $k \geq 2$, $n > 2k$ and $m \equiv_{k(k-1)} k(n-1)-i$ such that $\lceil \frac{k}{2} \rceil \leq i \leq 2\lfloor \frac{k}{2} \rfloor$.
\end{corollary}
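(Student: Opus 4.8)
The plan is to read the corollary off the two tightness constructions in the proof of Theorem~\ref{thm:main}, auditing precisely where the hypothesis $n \geq 3k-2$ is spent rather than the weaker $n > 2k$, exactly as the preceding remark indicates. So fix $k \geq 2$, $n > 2k$, and a target $m$ with $m \equiv_{k(k-1)} k(n-1)-i$ for some $i$ with $\lceil k/2 \rceil \leq i \leq 2\lfloor k/2 \rfloor$; the goal is to produce a minimally $k$-connected graph with these parameters that meets Bound~\eqref{eq:general}. Depending on whether $m \leq \frac{k(kn-1)}{2k-1}$ or $m \geq \frac{k(kn-1)}{2k-1}$, I would invoke the graph $H'_T(k,l,i)$ or the graph $H''$, respectively.

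First I would dispatch the regime $m \leq \frac{k(kn-1)}{2k-1}$. The key observation is that the small-$m$ half of the proof of Theorem~\ref{thm:main} never actually uses $n \geq 3k-2$: the chain that establishes $l \geq 1$ (through $l \geq 1-\tfrac{1}{k-1}$) rests only on $n > 2k$ together with $i \leq k$, and the corollary's range guarantees $i \leq 2\lfloor k/2 \rfloor \leq k$. Hence $H'_T(k,l,i)$ already certifies equality $|V_k| = (k+1)n-2m$, and therefore tightness without any ceiling, for all such $m$ under the sole hypothesis $n > 2k$.

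The hard part is the regime $m \geq \frac{k(kn-1)}{2k-1}$, and here the parity bookkeeping has to be done with care because the corollary's parameter $i$ (sitting in $k(n-1)-i$) is not the parameter used inside the large-$m$ construction. Meeting $(m-n+k)/(k-1)$ without the ceiling requires this quantity to be an integer, i.e.\ $m \equiv_{k-1} k(n-1)$, equivalently $i \equiv_{k-1} 0$; within $\lceil k/2 \rceil \leq i \leq 2\lfloor k/2 \rfloor$ this forces $i$ to be a multiple of $k-1$, hence $i=k-1$ (the value $i=2(k-1)$ lands in the range only when $k=2$). I would then translate this residue class into the large-$m$ construction's internal parameter $i^\ast \in \{0,\ldots,k-1\}$, which is defined there by $m \equiv_{k(k-1)} k(n-1)+(k-1)i^\ast$: solving $(k-1)i^\ast \equiv_{k(k-1)} -(k-1)$ yields $i^\ast+1 \equiv_k 0$, so $i^\ast = k-1$ (and $i^\ast=0$ in the exceptional $k=2$ subcase). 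The decisive point is that $i^\ast \in \{0,k-1\}$ avoids the single troublesome interval $1 \leq i^\ast < k/2$ flagged by the remark, since $k-1 \geq k/2$ for every $k \geq 2$; concretely, in the large-$m$ proof the bound $l \geq 2$ for $i^\ast \geq k/2$ follows from $m < kn - \binom{k+1}{2}$ by itself, so Mader's sharper estimate $m \leq kn-k^2$, which is the only ingredient consuming $n \geq 3k-2$, is never called upon. Thus $H''$ delivers $|V_k| = (m-n+k)/(k-1)$ under $n > 2k$.

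Combining the two regimes gives the claim: every admissible $(m,n,k)$ falls under exactly one of them, and the only classes in the stated range that can cross the threshold with an integral Oxley value have $i \equiv_{k-1} 0$, which I have just handled. I expect the main obstacle to be precisely this parity translation in the third step — matching the external $i$ of $k(n-1)-i$ against the internal $i^\ast$ of $k(n-1)+(k-1)i^\ast$, and confirming that no class with $\lceil k/2 \rceil \leq i \leq 2\lfloor k/2 \rfloor$ quietly demands the large-$m$ construction at a forbidden value $i^\ast$. The integrality condition $i \equiv_{k-1} 0$ is what ultimately dispels this worry, since it pins the relevant $i^\ast$ to $\{0,k-1\}$.
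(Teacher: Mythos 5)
Your proposal is correct and follows essentially the same route as the paper, whose entire justification for the corollary is the one-sentence remark that the precondition $n \geq 3k-2$ is consumed only in the large-$m$ construction for the internal parity values $1 \leq i < \frac{k}{2}$. Your explicit translation between the external residue parameter in $k(n-1)-i$ and the construction's internal parameter (pinning the latter to $\{0,k-1\}$, hence outside the troublesome interval) is carried out more carefully than in the paper, which leaves that matching implicit.
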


Bound~\eqref{eq:general} implies the best known special-purpose bounds for $k=2$ and $k=3$ (see~\cite[Prop.~2.14+20]{Oxley1981} and~\cite[Fact~81]{Gross2013}) and improves them for every $m < \lfloor \frac{k(kn-1)}{2k-1} \rfloor$. By comparing Bound~\eqref{eq:general} with Mader's Bound~\eqref{eq:Mader}, we obtain immediately that the two bounds match if and only if $m = \frac{k(kn-1)}{2k-1}$. Hence, for the given parities, Mader's bound is only best possible if $m = \frac{k(kn-1)}{2k-1}$; see Figure~\ref{fig:Plot} for a comparison of these two bounds.

\begin{figure}[h!tb]
	\centering
	\subfloat[A 3D-plot for $k=3$. A blue ($m \leq k(kn-1)/(2k-1)$) or green ($m \geq k(kn-1)/(2k-1)$) dot at point $(n,m,|V_k|)$ shows the existence of a graph for which Bound~\eqref{eq:general} is tight. Red dots depict values for which Bound~\eqref{eq:Mader} is tight (neglecting $m$).]{
		\includegraphics[scale=0.29]{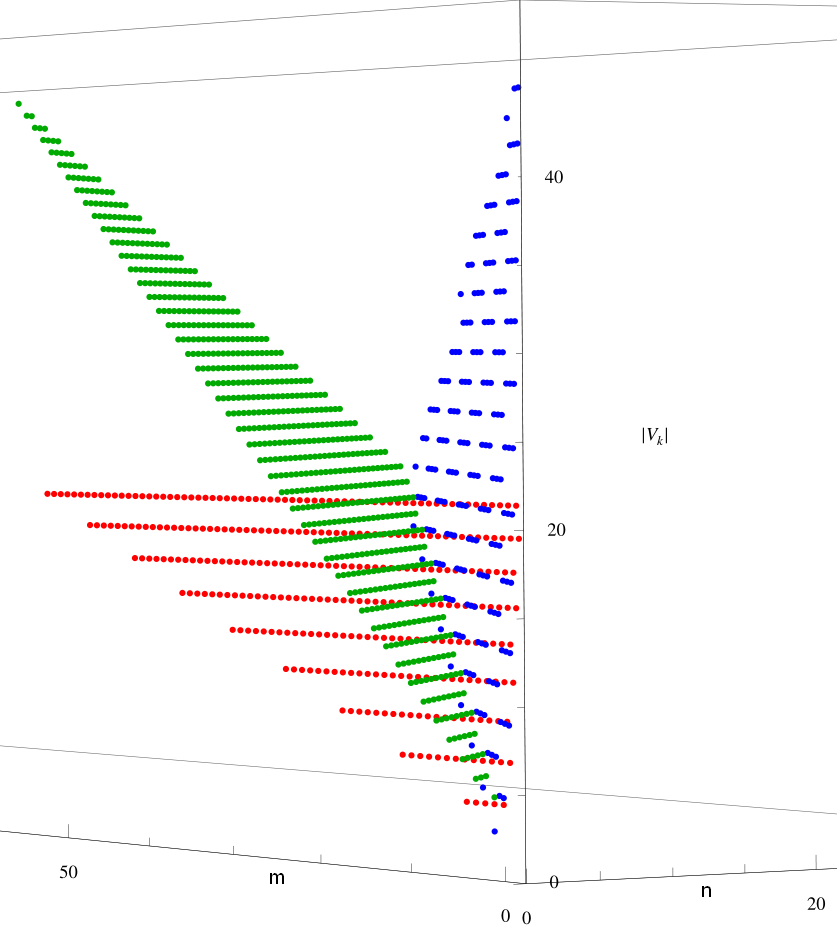}
		\label{fig:Plot1}
	}
	\hspace{0.4cm}
	\subfloat[A 2D-plot for $k=4$ and $n=100$ that shows tight values of Bound~\eqref{eq:general} (green and blue) and Bound~\eqref{eq:Mader} (red) for the relevant ranges of $m$.]{
		\includegraphics[scale=0.18]{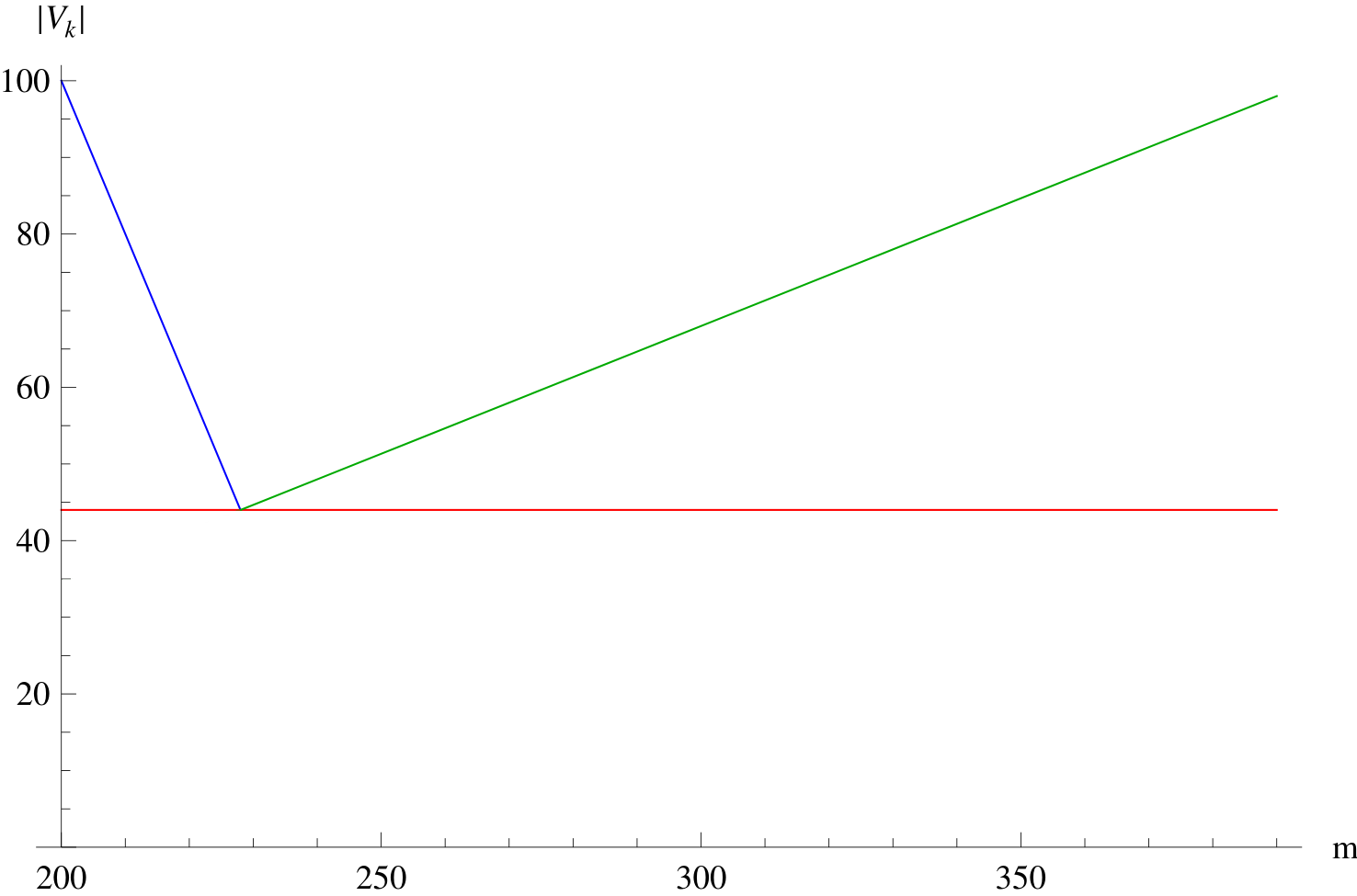}
		\label{fig:Plot2}
	}
	\caption{Comparing tight values of Bounds~\eqref{eq:Mader} and~\eqref{eq:general}.}
	\label{fig:Plot}
\end{figure}

While Corollary~\ref{cor:boundsmalln} shows that Bound~\eqref{eq:general} is tight for $n > 2k$, we leave the problem of determining tight bounds for $n \leq 2k$ as open question. Note that Bound~\eqref{eq:general} is not tight for $n=2k$ and $m=\frac{k(kn-1)}{2k-1}$, as every minimally $k$-connected graph satisfying these constraints has strictly more than $\lceil (m-n+k)/(k-1) \rceil = \lceil k+\frac{k}{2k-1} \rceil = k+1$ vertices in $V_k$ due to~\cite[Satz~4]{Mader1979}.

\paragraph{Acknowledgments.} I wish to thank Thomas Böhme for helpful discussions.



\begin{thebibliography}{10}

\bibitem{Bollobas2004}
B.~Bollob{\'a}s.
\newblock {\em Extremal graph theory}.
\newblock Courier Corporation, 2004.

\bibitem{Dirac1967}
G.~A. Dirac.
\newblock Minimally 2-connected graphs.
\newblock {\em Journal f\"ur die reine und angewandte Mathematik},
  228:204--216, 1967.

\bibitem{Gross2013}
J.~L. Gross, J.~Yellen, and P.~Zhang.
\newblock {\em Handbook of Graph Theory}.
\newblock CRC Press, second edition, 2013.

\bibitem{Halin1969b}
R.~Halin.
\newblock A theorem on n-connected graphs.
\newblock {\em Journal of Combinatorial Theory}, 7(2):150--154, 1969.

\bibitem{Harary1962}
F.~Harary.
\newblock The maximum connectivity of a graph.
\newblock {\em Proceedings of the National Academy of Sciences of the United
  States of America}, 48(7):1142--1146, 1962.

\bibitem{Kriesell2013}
M.~Kriesell.
\newblock Minimal connectivity.
\newblock In L.~W. Beineke and R.~J. Wilson, editors, {\em Topics in Structural
  Graph Theory}, pages 71--94. Cambridge University Press, 2013.

\bibitem{Mader1971}
W.~Mader.
\newblock Minimale n-fach zusammenh\"angende {G}raphen mit maximaler
  {K}antenzahl.
\newblock {\em Journal für die reine und angewandte {M}athematik},
  249:201--207, 1971.

\bibitem{Mader1972}
W.~Mader.
\newblock Ecken vom {G}rad n in minimalen n-fach zusammenh\"angenden {G}raphen.
\newblock {\em Archiv der Mathematik}, 23(1):219--224, 1972.

\bibitem{Mader1979}
W.~Mader.
\newblock Zur {S}truktur minimal n-fach zusammenh\"angender {G}raphen.
\newblock {\em Abhandlungen aus dem Mathematischen Seminar der Universit\"at
  Hamburg}, 49(1):49--69, 1979.

\bibitem{Mader1996}
W.~Mader.
\newblock On vertices of degree n in minimally n-connected graphs and digraphs.
\newblock {\em Bolyai Society Mathematical Studies (Combinatorics, Paul
  Erd\H{o}s is Eighty, Keszthely, 1993)}, 2:423--449, 1996.

\bibitem{Oxley1981}
J.~G. Oxley.
\newblock On connectivity in matroids and graphs.
\newblock {\em Transactions of the American Mathematical Society},
  265(1):47--58, 1981.

\bibitem{Plummer1968}
M.~D. Plummer.
\newblock On minimal blocks.
\newblock {\em Transactions of the American Mathematical Society}, 134:85--94,
  1968.

\end{thebibliography}


\end{document}